\newcommand{\iso}{\ensuremath{\cong}}
\newcommand{\Z}[1][]{\ensuremath{\mathbb{Z}_{#1}}}
\newcommand{\N}{\ensuremath{\mathbb{N}}}
\newcommand{\F}{\ensuremath{\mathbb{F}}}
\newcommand{\nsgp}[1][]{\ensuremath{\triangleleft_{#1}}}
\newcommand{\gp}[1]{\ensuremath{\langle #1\rangle}}
\DeclarePairedDelimiter\gpn{\langle\! \langle}{\rangle\! \rangle}
\DeclarePairedDelimiter\flr{\lfloor}{\rfloor}
\newtheorem{theorem}{Theorem}[]
\newtheorem{prop}[theorem]{Proposition}
\newtheorem{lem}[theorem]{Lemma}
\theoremstyle{definition}
\newtheorem*{cnv}{Notations}
\newtheorem*{ackn}{Acknowledgements}
\theoremstyle{remark}
\newtheorem*{rmk}{Remark}
\theoremstyle{plain}
\newtheorem*{thmquoteA}{Theorem \ref{VirtAbelian}}
\newcounter{introthmcount}
\theoremstyle{definition}
\newcommand{\rv}[2][l]{\ensuremath{\mathcal{#2}_{#1}}}
\newcommand{\PP}{\ensuremath{\mathbb{P}}}
\newcommand{\aas}{a.\ a.\ s.}
\newcommand{\dFK}{\ensuremath{d_F(K;\Z{})}}
\title{Virtually abelian quotients of random groups}
\author{Gareth Wilkes}
\begin{document}
\maketitle
\begin{abstract}
It is well-known that random groups with at least as many relators as generators have vanishing first betti number with high probability. In this paper we extend this question and study maps from few-relators random groups to infinite virtually abelian groups.
\end{abstract}
\subsection*{Introduction}
There is no especially natural probability distribution on the class of groups, and accordingly no totally natural definition of what a `random group' should be. In consequence many models of `random groups' are studied, most of which should really be termed `random finite presentations'. One common choice is to fix a free group $F$ of rank at least two and randomly choose a certain number of reduced words $\rho(l)$ of length $l$ in $F$ to be the relators of a `random group' \rv{G}. Other possibilities include models where relators are chosen to be {\em cyclically} reduced; where random elements of the free group of length {\em at most} $l$ are chosen; and where relators are all chosen to be of length 3, but the number of generators of the group is permitted to grow (the {\em triangular model} exploited by \.Zuk \cite{Zuk03}). Ollivier \cite{Ollivier05} has written an excellent introduction to the subject.

Let us set up the notation for the present paper. Let $n>1$ be an integer and let $F=F(X)$ be the free group on a set of $n$ generators $X=\{x_1,\ldots,x_n\}$. For $l\in\N$ let $S_l=S_l(X)$ be the set of reduced words of length $l$ in the alphabet $X$. Let \rv{R} be a subset of $S_l$ formed by randomly choosing $\rho(l)$ elements of $S_l$ uniformly, independently and allowing repetitions, where $\rho(l)$ is some function of $l$. The {\em `random group'} $\rv{G} = \gp{X|\rv{R}}$ satisfies a property $P$ of groups {\em asymptotically almost surely} (\aas) if
\[\PP(\rv{G} \text{ satisfies }P)\to 1 \quad\text{as }l\to \infty\text{.}\]
Corresponding to different choices of $\rho(l)$ one obtains models of groups with different asymptotic properties. The most important choices are the {\em few relators model(s)} introduced by Arzhantseva and Ol'shanskii \cite{AO96}, in which $\rho(l)$ is some constant; and the {\em Gromov density model(s)}, in which $\rho(l)=\flr{|S_l|^d}$ for some `density' $d\in(0,1)$ \cite[Chapter 9]{Gromov93}. The few relators model is sometimes regarded as the `$d=0$ case' of the density model; Kapovich and Schupp \cite{KS08} discuss the relationship between these two models in more detail.

There is a well-known result that a random group on $n$ generators with $n$ relators \aas\ does not admit a surjection to \Z, which may be thought of as follows. The relators in \rv{R} give a family of elements in the abelianisation $\Z^n$ of the free group $F$, which become `well spread out' and therefore `generic' as the length $l$ of the relators grows. Therefore one expects that $\rho(l)=n$ relators should generate a finite index subgroup of $\Z^n$ with high probability, in which case \rv{G} cannot map to \Z{} in a non-trivial way.

We will use this approach to study maps to $\Z$ from subgroups \rv{H} of \rv{G} with a (bounded) finite index. Intuitively one should consider an appropriate finite index subgroup $H$ of the free group $F$, abelianise $H$ and consider the probability that $(H \cap \gpn{\rv{R}})^{\rm ab}$ has finite index in $H^{\rm ab}$, and hence forbids any map $H\to \Z$ descending to a map from \rv{H} to $\Z$. Again the sentiment should be that the relators become spread out sufficiently evenly to force this to happen once the number of relators is sufficiently large. The main result will be the following theorem.
\begin{thmquoteA}
For all $M$ there exists a constant $c(M)$ such that, for any infinite group $G$ having an abelian normal subgroup of index at most $M$,
\[\PP(\exists \text{ a surjection }\rv{G}\twoheadrightarrow G) \to 0\quad \text{ as }l\to\infty \]
for a random group \rv{G} having at least $\rho(l)\geq c(M)$ relators.
\end{thmquoteA}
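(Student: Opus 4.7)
First, I would reduce the problem to preventing $\rv{G}$ from having a normal subgroup of index at most $M$ that admits a surjection onto $\Z$. Any surjection $\phi\colon \rv{G} \twoheadrightarrow G$ restricts, on the normal subgroup $H = \phi^{-1}(A) \trianglelefteq \rv{G}$ of index $[G:A] \leq M$, to a surjection $H \twoheadrightarrow A$; composing with the projection $A \twoheadrightarrow A/\mathrm{torsion} \cong \Z^k$ with $k \geq 1$ produces a surjection $H \twoheadrightarrow \Z$. Normal subgroups $H \trianglelefteq \rv{G}$ of index at most $M$ correspond to normal subgroups $\tilde H \trianglelefteq F$ of the same index satisfying $\gpn{\rv{R}} \subseteq \tilde H$, equivalently $\rv{R} \subseteq \tilde H$, and $F$ has only finitely many subgroups of bounded index. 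After a union bound, it thus suffices to prove that for each fixed normal $\tilde H \trianglelefteq F$ of index $m \leq M$,
\[ \PP\bigl(\rv{R}\subseteq\tilde H\ \text{and}\ \tilde H / \gpn{\rv{R}} \twoheadrightarrow \Z\bigr) \to 0 \text{ as } l \to \infty. \]

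Fix such $\tilde H$ and set $\Gamma = F/\tilde H$. By Nielsen-Schreier, $\tilde H^{\rm ab} \cong \Z^d$ with $d = (n-1)m + 1$, and $F$-conjugation endows this with a natural $\Z\Gamma$-module structure. Writing $\bar N \leq \tilde H^{\rm ab}$ for the image of $\gpn{\rv{R}}$, normality of $\tilde H$ together with $\rv{R} \subseteq \tilde H$ ensures that $\bar N$ is precisely the $\Z\Gamma$-submodule generated by the abelianization classes $\{[r] : r \in \rv{R}\}$. The quotient $\tilde H / \gpn{\rv{R}}$ surjects onto $\Z$ if and only if $\bar N$ has rank strictly less than $d$. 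Applying Maschke-Wedderburn, I decompose $\tilde H^{\rm ab}\otimes\Q = \bigoplus_j U_j^{k_j}$ into $\Q\Gamma$-isotypic components over the finitely many simple modules $U_j$; then $\bar N\otimes\Q$ is of full rank iff its projection onto each $U_j^{k_j}$ spans that component as a $\Q\Gamma$-module. Equivalently, the projected classes $\pi_j([r])$ must be linearly independent to rank $k_j$ over the division algebra $D_j = \operatorname{End}_{\Q\Gamma}(U_j)$, and $k_j \leq d$.

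The probabilistic core of the argument is a local central limit theorem: conditional on $r \in \tilde H$, the class $[r] \in \tilde H^{\rm ab}$ is, after rescaling by $\sqrt{l}$, approximately distributed as a non-degenerate Gaussian on $\tilde H^{\rm ab}\otimes\R$; this will follow from a CLT for closed random walks on the $m$-sheeted covering of the rose $R_n$ corresponding to $\tilde H$. For $c$ sufficiently large in terms of $M$ (I would aim for $c(M)$ slightly exceeding $(n-1)M+1$), the conditional probability that $c$ IID samples from this distribution fail to project to generating sets in every isotypic component tends to $0$ as $l \to \infty$. Combined with the trivial bound $\PP(\rv{R}\subseteq\tilde H)\leq 1$ and the union bound over the finitely many $\tilde H$, this yields the theorem.

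The principal technical obstacle is the local CLT itself, and specifically the non-degeneracy of the limiting Gaussian: one must show that for every proper $\Q\Gamma$-submodule $W \subsetneq \tilde H^{\rm ab}\otimes\Q$, the probability that a random word of length $l$ lying in $\tilde H$ has abelianization in $W$ decays as $l \to \infty$. Ensuring this decay uniformly across isotypic sub-directions, in the presence of the $\Gamma$-equivariant structure on $\tilde H^{\rm ab}$, is where the analytic effort concentrates; everything else is semisimple representation theory and a union bound over a finite set.
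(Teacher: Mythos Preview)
Your reduction matches the paper's: fix a normal subgroup $\tilde H = K \trianglelefteq F$ of bounded index, treat $\tilde H^{\rm ab}$ as a module over $\Gamma = F/\tilde H$, and show that the relators generate a finite-index submodule with high probability. The paper also exploits the semisimple decomposition and the isotypic components, just as you do. Where you diverge is in the probabilistic engine. You propose a (local) central limit theorem for the abelianisation of a random reduced word conditioned to lie in $\tilde H$, working directly over $\Q$; the paper instead passes to $H_1(K;\F_q)$ for large primes $q$, uses elementary Markov-chain convergence on the \emph{finite} group $F/[K,K]K^q$ to obtain an asymptotically uniform distribution, bounds the probability that the relators fail to generate the $E$-isotypic piece by an explicit product $\bigl(1-(1-\epsilon)^m\prod_j(1-|E|^{-j})\bigr)(\tfrac{2+2\epsilon}{|J|})^m$, and then lets $q\to\infty$. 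The paper in fact mentions your CLT route in the introduction and deliberately avoids it as ``heavy-handed''. What the paper's approach buys is that the limiting behaviour is just convergence of a finite irreducible Markov chain to its stationary distribution---no CLT, no covariance non-degeneracy to verify, and explicit bounds at every stage. What your approach buys is conceptual directness: no auxiliary prime, no double limit in $l$ and $q$.

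Two small comments on your outline. First, you do not need a \emph{local} CLT: the failure event ``the $\Q\Gamma$-span of $[r_1],\ldots,[r_c]$ is proper'' is scale-invariant and cut out by the vanishing of integer-coefficient minors, hence is a closed set of Lebesgue (and Gaussian) measure zero in $(\tilde H^{\rm ab}\otimes\R)^c$; ordinary weak convergence to a non-degenerate Gaussian plus the Portmanteau theorem already forces the probability to tend to zero. Second, the value $c(M)=(n-1)M+1$ is enough on the nose---you need $c\geq\max_j k_j$, and each multiplicity $k_j$ is bounded by $\dim_\Q(\tilde H^{\rm ab}\otimes\Q)=(n-1)[F:\tilde H]+1$; this is consistent with the paper's cruder bound $c(M)\leq Mn$. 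The genuine work you would have to do, as you correctly identify, is the conditional CLT for the homology class of a closed non-backtracking walk on the finite cover, together with non-degeneracy of its covariance; the paper's finite-field detour sidesteps exactly this.
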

To give an indication of scale, a somewhat generous upper bound for $c(M)$ is $c(M)\leq Mn$.

We point out that a finite index subgroup of a random group \rv{G} (if any finite index subgroups exist at all) is not generally a `random group' in the same sense as the overgroup \rv{G}, so that this result is not an immediate consequence of the classical result that random groups do not surject to \Z.

Note that there is some flexibility in the statement of this theorem. By factoring out torsion, it suffices to address the case when $G$ has a torsion-free abelian group of index at most $M$. There are only finitely many $n$-genrated groups of this type, so we may equivalently give the conclusion of the theorem as
\[\PP(\exists G \in A_M \text{ admitting a surjection }\rv{G}\twoheadrightarrow G) \to 0 \]
where $A_M$ is the class of infinite groups with an abelian group of index at most $M$.

One somewhat heavy-handed way to capture rigorously the sentiment that $n$ relators in $\Z^n$ become `well spread out' would be to use a Central Limit Theorem to argue that the distribution becomes approximately multivariate normal \cite{FvdH13}, and thus to argue that $n$ points in this distribution are generically a basis. 

In this paper we do not use that viewpoint; instead our analysis is based on studying a certain notion of non-backtracking random walks from the point of view of Markov processes to control the limiting behaviour of the relators. This approach works best on finite state spaces, so instead of maps to \Z{} we will consider maps to finite cyclic groups $\F_q$ for arbitraily large primes $q$.

\begin{ackn}
The author wishes to thank Goulnara Arzhantseva and Federico Vigolo for reading this paper and giving helpful comments. The author was supported by a Junior Research Fellowship from Clare College Cambridge.
\end{ackn}
\begin{cnv} {\hfill}
\begin{itemize}
\item Calligraphic letters will denote objects which are in some sense random.
\item Let $G$ be a group, let $X$ be a set and let $X\to G$ be a function. For a word $w$ in the alphabet $X\cup X^{-1}$ we write $w=_G g$ to mean `the word $w$ evaluates to $g\in G$ under the natural map $F(X)\to G$'.
\end{itemize}
\end{cnv}

\subsection*{Non-backtracking Random Walks}
One subtlety in the formation of random reduced words in a free group is that each letter is not independent of the previous letters, as no letter can follow its inverse. We will manufacture a Markov process from the formation of reduced words by enlarging the state space to include a marker for the last letter added.

Let $G$ be a group, let $X=\{x_1,\ldots, x_n\}$ be a set of size $n\geq 2$ and fix some function $X\to G$. We will tacitly identify the $x_i$ with their images in $G$. Let $\Omega$ be the space $G\times \{\pm1, \pm2,\ldots, \pm n\}$. We will denote an element of $\Omega$ by $(g,\epsilon i)$ where $g\in G$, $\epsilon=\pm 1$ and $i\in\{1,\ldots,n\}$.

A {\em non-backtracking random walk on $(G,X)$} is a Markov process \rv{X} on the space $\Omega$,
where
\[\PP\big(\rv[l+1]{X} = (g,\epsilon i) \mid \rv{X} = (h,\epsilon' j)\big) = \begin{cases}
\alpha_{\epsilon' j, \epsilon i} & \text{if }g=hx_i^{\epsilon} \text{ and } \epsilon i \neq -\epsilon' j \\
0 & \text{otherwise}
\end{cases} \] 
for some positive constants $\alpha_{\epsilon' j, \epsilon i}$ such that \[\sum_{\substack{\epsilon i\ \in\{\pm1,\ldots,\pm n\}\\ \epsilon i\neq -\epsilon' j}} \alpha_{\epsilon' j, \epsilon i} = 1,\]
and where the initial distribution \rv[1]{X} is of the form
\[\PP\big(\rv[1]{X} = (g,\epsilon i)\big) = \begin{cases}
\beta_{\epsilon i} & \text{if }g=x_i^{\epsilon} \\
0 & \text{otherwise}
\end{cases}  \]
for positive constants $\beta_{\epsilon i}$ such that $\sum\beta_{\epsilon i} = 1$. 

We will largely be concerned with the limiting behaviour of these processes for finite groups $G$, and the exact value of the constants $\alpha_{\bullet\bullet}$ and $\beta_\bullet$ will not be especially important. In the cases actually used in this paper---the `unbiased non-backtracking random walk'---one has $\alpha_{\bullet,\bullet}=1/(2n-1)$ and $\beta_\bullet = 1/2n$.

Note that the transition matrix of \rv{X} is invariant under the left-multiplication action of $G$. We may consider $\Omega$ to be a directed graph with an edge from $(h,\epsilon' j)$ to $(g,\epsilon i)$ if and only if
\[\PP\big(\rv[l+1]{X} = (g,\epsilon i) \mid \rv{X} = (h,\epsilon' j)\big)>0\]
Observe that paths in $\Omega$ from $(h,\epsilon' j)$ to $(g,\epsilon i)$ correspond to reduced words $w$ in $F=F(X)$ whose final letter is $x_i^{\epsilon}$ and whose first letter is different from $x_j^{-\epsilon'}$, and which evaluate to $h^{-1}g$ in $G$. In particular loops based at $(g, \epsilon i)$ in $\Omega$ correspond exactly to cyclically reduced words in $F$ whose last letter is $x_i^{\epsilon}$ and which evaluate to 1 in $G$.

Define the associated `summed process' \rv{\overline X} via the projection $\Omega \to G$, so that
\[\PP(\rv{\overline X}=g) = \sum_{\epsilon i} \PP\big(\rv[l]{X} = (g,\epsilon i)\big)\]
Then the probability that a random reduced word of length $l$ in $F$ evaluates to $g$ in $G$ is precisely $\PP(\rv{\overline X}=g)$. 

Note that \rv{\overline X} is not Markov, but will inherit good limiting properties from \rv{X}. Specifically, if \rv{X} converges to some stationary distribution on $\Omega$, then \rv{\overline X} also converges to a limiting `stationary' distribution---which must be the uniform distribution on $G$ by left-invariance.

\begin{rmk}
The reader should note that the nomenclature `non-backtracking random walk on $(G,X)$' may be slightly inaccurate in some cases---specifically if $x_i^2=_G 1$ then \rv{X} may well `backtrack' on a path 
\[(1, +i) \to (x_i, +i) \to (1, +i) \]
More generally, if several generators (or their inverses) happen to be equal in $G$ then the walk \rv{\overline X} on $G$ could also backtrack. However, except in this situation, we would indeed have a non-backtracking random walk on the Cayley (multi)graph of $(G,X)$ and it seems churlish to introduce a new name to acccount for this somewhat degenerate case. 
\end{rmk}
\begin{lem}
The Markov process \rv{X} is irreducible if and only if $G$ is generated by the $x_i$, but is not a free group freely generated by the $x_i$.
\end{lem}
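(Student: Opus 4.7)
My plan is to handle the two directions separately, viewing irreducibility of \rv{X} as strong connectivity of the directed graph $\Omega$, and exploiting that paths in $\Omega$ from $(h,\epsilon'j)$ to $(g,\epsilon i)$ correspond to reduced words $w\in F$ with a specified forbidden first letter $x_j^{-\epsilon'}$, a required last letter $x_i^\epsilon$, and with $w=_G h^{-1}g$. For the reverse direction, I would assume \rv{X} is irreducible; applying this to paths from $(1,+1)$ to $(g,+1)$ for arbitrary $g\in G$ produces reduced words witnessing $g\in\langle X\rangle$, so $X$ generates $G$. Applying it to paths from $(1,+1)$ to $(1,+2)$ produces a \emph{nonempty} reduced word $w$ ending in $x_2$ with $w=_G 1$, forcing $N:=\ker(F\to G)\neq 1$ and hence that $G$ is not freely generated by $X$.

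For the main direction, assume $G=\langle X\rangle$ and $N\neq 1$. Given states $(h,\epsilon'j)$ and $(g,\epsilon i)$, write $\ell=x_j^{-\epsilon'}$ and $\lambda=x_i^\epsilon$, and pick any reduced $u\in F$ with $u=_G h^{-1}g$. I would then modify $u$ in two stages using cyclically reduced elements of $N$. First, I produce a cyclically reduced $r'\in N$ ending in $\lambda$: starting from any cyclically reduced $r_0\in N$, if $\lambda$ or $\lambda^{-1}$ appears in $r_0$ then $r_0^{\pm 1}$ cyclically permuted does the job, and otherwise $r_0\lambda r_0\lambda^{-1}$ is a cyclically reduced element of $N$ containing $\lambda$ which I cyclically permute to end in $\lambda$. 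Replacing $u$ by the free reduction of $u\cdot (r')^k$ with $k|r'|>|u|$ preserves the $G$-value $h^{-1}g$, and since the cancellation at the junction is bounded by $|u|<k|r'|$, the reduced form $u'$ still ends in $\lambda$.

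In the second stage, if $u'$ already starts with a letter different from $\ell$ we are done; otherwise I must produce a cyclically reduced $r''\in N$ whose first letter $y$ satisfies $y\neq\ell$ and whose last letter $z$ satisfies $z\neq\ell^{-1}$. A short combinatorial argument shows that if every cyclic adjacent pair $(z,y)$ in a cyclically reduced word satisfies $z=\ell^{-1}$ or $y=\ell$, then the word must be a pure power $\ell^{\pm M}$; hence it suffices to exhibit a cyclically reduced element of $N$ that is not a pure power of $\ell^{\pm 1}$, and then cyclically permute. If $r_0$ is already not such a pure power, use it; otherwise $r_0=\ell^{\pm M}$, and I invoke $n\geq 2$ to pick a letter $x\in(X\cup X^{-1})\setminus\{\ell,\ell^{-1}\}$ and form $xr_0x^{-1}r_0\in N$ (WLOG $r_0=\ell^M$), which is cyclically reduced, contains $x$, and so is not a pure power. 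Cyclically permuting gives $r''$; prepending $(r'')^{k'}$ to $u'$ then introduces \emph{no} cancellation at the junction, because the last letter of $(r'')^{k'}$ is $z\neq\ell^{-1}$ while the first letter of $u'$ is $\ell$. The resulting reduced word has first letter $y\neq\ell$, last letter $\lambda$, and value $h^{-1}g$ in $G$, giving the required path in $\Omega$.

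The main obstacle is arranging that the prepending step does not destroy the last letter so carefully installed in the first stage: the junction must be cancellation-free, which dictates the combinatorial requirement $z\neq\ell^{-1}$ on $r''$ and in turn motivates the pure-power lemma together with the $n\geq 2$ construction to handle the case where the chosen relator is itself a pure power. The remaining steps---that $(r')^k$ and $(r'')^{k'}$ are reduced because $r'$ and $r''$ are cyclically reduced, and that the cancellation in any product $uv$ of reduced words is bounded by $\min(|u|,|v|)$---are routine bookkeeping.
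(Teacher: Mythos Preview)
Your argument is correct. The ``only if'' direction matches the paper's: a path from $(1,+1)$ to $(g,+1)$ shows $X$ generates $G$, and a nontrivial loop (you use a path from $(1,+1)$ to $(1,+2)$) gives a nonempty reduced relator, so $G$ is not free on $X$.

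For the ``if'' direction the strategy is the same---start from any reduced $u$ with $u=_G h^{-1}g$ and multiply by suitable relators to adjust the terminal and then the initial letter---but the implementations differ. You build the auxiliary relators by cyclically permuting a fixed cyclically reduced $r_0\in N$ (or the derived words $r_0\lambda r_0\lambda^{-1}$ and $xr_0x^{-1}r_0$), and then rely on taking a high power $(r')^k$ so that the bounded cancellation with $u$ cannot reach the installed last letter; the delicate part is ensuring the second stage does not disturb the first, which you handle via the ``pure power'' observation and the $n\ge 2$ trick. The paper instead writes down, for each $\epsilon i$, a single explicit relator
\[
r_{\epsilon i}\;=\;\text{reduction of }x_i^{-\epsilon}x_{i+1}^{-1}x_i^{-m}\,r\,x_i^{m}x_{i+1}x_i^{\epsilon}\qquad(m\ge |r|),
\]
which already has prescribed \emph{first} letter $x_i^{-\epsilon}$ and last letter $x_i^{\epsilon}$. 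Because of this, appending $r_{\epsilon i}$ to a $w$ whose last letter is not $x_i^{\epsilon}$, and prepending $r_{+k}$ (with $k\neq j$) to a $w$ whose first letter is $x_j^{-\epsilon'}$, are both cancellation-free; no powers and no pure-power case analysis are needed. Your route is a bit longer but entirely sound; the paper's explicit conjugate buys a one-line endgame.
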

\begin{proof}
Irreducibility of \rv{X} is equivalent to the statement that for any $g\in G$ and any $x_i^{\epsilon}, x_j^{\epsilon'}\in X^{\pm 1}$ there exists a non-trivial reduced word $w\in F=F(X)$ which begins with a letter not equal to $x_j^{-\epsilon'}$, ends with $x_i^{\epsilon}$, and evaluates to $g$ in $G$. This immediately forces $G$ to be generated by the $x_i$ and the case $g=1$ shows that $G$ cannot be freely generated by $X$.

Conversely, suppose that $G$ is generated but not freely generated by the $x_i$. By hypothesis there is some reduced word $r$ in $F$ such that $r=_G 1$. We may construct for each $\epsilon i$ a reduced word $r_{\epsilon i}=_G 1$ which has first letter equal to $x_i^{-\epsilon}$ and final letter equal to $x_i^{\epsilon}$. Any word
\[x_i^{-\epsilon}x_{i+1}^{-1} x_i^{-m} r x_i^m x_{i+1} x_i^{\epsilon}\]
evaluates to $1$, and for $m$ sufficiently large (for example, for $m$ at least the length of $r$) reducing such a word gives an $r_{\epsilon i}$ with the specified initial and terminal letters.

Now take $g\in G$ and $x_i^{\epsilon}, x_j^{\epsilon'}\in X^{\pm 1}$. Since the $x_i$ generate $G$ there is a reduced word $w$ in $F$ such that $w=_G g$. If the final letter of $w$ is not $x_i^{\epsilon}$, then replace $w$ by the reduced word $w r_{\epsilon i}$. If the first letter of this new $w$ is $x_j^{-\epsilon'}$, take $k\neq j$ and replace $w$ by the reduced word $r_{+k} w$. This new $w$ gives a path in $\Omega$ from $(1, \epsilon' j)$ to $(g, \epsilon i)$ and exhibits irreducibility.
\end{proof}
\begin{lem}
Suppose that \rv{X} is irreducible. Then the period of \rv{X} is either 1 or 2. 
\end{lem}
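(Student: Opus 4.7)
The plan is to exploit the $G$-equivariance of the transition rule together with the cyclic-partition structure of a periodic irreducible chain. Let $d$ denote the period. I would first recall that an irreducible chain of period $d$ admits a partition function $\pi: \Omega \to \Z/d$, unique up to an additive constant, with the property that every directed edge $s \to s'$ of $\Omega$ satisfies $\pi(s') = \pi(s) + 1$. Concretely, one fixes a basepoint $s_0$ and sets $\pi(s)$ to be the length modulo $d$ of any path $s_0 \to s$; this is well-defined because any two such paths, closed up by the same return path to $s_0$, produce loops whose lengths both lie in $d\Z$.

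Next I would observe that the left $G$-action $g \cdot (h, \epsilon i) = (gh, \epsilon i)$ preserves the directed graph structure on $\Omega$ (this is precisely the left-invariance of the transition matrix noted in the paper). Translating a path by $g$ preserves its length, so the quantity $f(g) := \pi(g \cdot s) - \pi(s)$ is independent of $s$, and a direct check shows $f: G \to \Z/d$ is a group homomorphism. In particular $\pi(h, \epsilon i) = f(h) + \pi(1, \epsilon i)$.

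I would then substitute this into the edge relation $\pi(hx_i^\epsilon, \epsilon i) = \pi(h, \epsilon' j) + 1$, obtaining
\[ f(x_i^\epsilon) + \pi(1, \epsilon i) \equiv \pi(1, \epsilon' j) + 1 \pmod d \]
for every $(\epsilon' j)$ with $\epsilon' j \neq -\epsilon i$. The left-hand side does not depend on $(\epsilon' j)$, so $\pi(1, \epsilon' j)$ is constant on the $(2n-1)$-element admissible set; varying $(\epsilon i)$ and using $n \geq 2$ so that the two excluded orientations differ, $\pi(1, \cdot)$ is in fact globally constant. The equation then collapses to $f(x_i^\epsilon) = 1 \in \Z/d$ for every $(i, \epsilon)$, and combining with the homomorphism identity $f(x_i^{-1}) = -f(x_i)$ forces $1 \equiv -1 \pmod d$, whence $d \mid 2$.

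The main step requiring care is the initial packaging of the structure theorem and the $G$-equivariance into the single homomorphism $f: G \to \Z/d$. Once that data is in hand, the conclusion $d \in \{1, 2\}$ is essentially forced by the fact that every edge of $\Omega$ simultaneously appends one letter $x_i^\epsilon$ and increments $\pi$ by exactly one, while the values $f(x_i^\epsilon)$ are constrained to be mutually inverse in pairs.
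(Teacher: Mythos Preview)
Your proof is correct and takes a genuinely different route from the paper's. The paper argues combinatorially: it fixes one loop $w$ at $(1,+1)$ of length $l$, then manufactures from it explicit loops of lengths $3l$ and $3l+2$ (or $3l+4$ and $3l+6$, depending on the first letter of $w$) by inserting or conjugating by short words, and reads off that the gcd of loop lengths divides $2$. Your argument is structural: you package the periodicity as a $\Z/d$-grading $\pi$ on $\Omega$, push the left $G$-action through it to obtain a homomorphism $f\colon G\to\Z/d$, and then extract $f(x_i^{\pm 1})=1$ from the transition rule to force $d\mid 2$. The paper's approach is more self-contained (no appeal to the cyclic-class decomposition of a periodic chain) and entirely hands-on; yours is cleaner, avoids case analysis, and has the pleasant side effect that when $d=2$ the subgroup $\ker f$ is exactly the index-$2$ subgroup $H$ constructed in the paper's next lemma.
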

\begin{proof}
By irreducibility, there is a non-trivial reduced word $w$ in $F$ of length $l$ which gives a path in $\Omega$ from $(1, +1)$ to $(1, +1)$---that is, the first letter $x_s^{\epsilon}$ of $w$ is not $x_1^{-1}$ and the last letter is $x_1$. Then $w^3$ is a path in $\Omega$ from $(1, +1)$ to $(1, +1)$ of length $3l$. Consider the following two cases. 

If $s=1$, so that $w$ begins and ends with $x_1$, then the reduced word $wx_2wx_2^{-1} w$ gives a loop of length $3l+2$ in $\Omega$. Hence the period of \rv{X} divides both $3l$ and $3l+2$, hence is at most 2.

If $s\neq 1$, then the words $wx_1x_s^{\epsilon}w x_s^{-\epsilon} x_1^{-1} w$ and $wx_1^2x_s^{\epsilon}w x_s^{-\epsilon} x_1^{-2} w$ provide loops of lengths $3l+4$ and $3l+6$ in $\Omega$, once again showing that the period of \rv{X} is at most 2.
\end{proof}
\begin{lem}
Suppose \rv{X} is irreducible. Then \rv{X} has period 2 if and only if there exists an index 2 subgroup $H$ of $G$ such that $x_i\notin H$ for all $i$.
\end{lem}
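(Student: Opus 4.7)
My plan is to show that both conditions are equivalent to the existence of a surjective homomorphism $\bar\phi : G \twoheadrightarrow \Z/2\Z$ sending each generator $x_i$ to the nontrivial element; equivalently, that the homomorphism $\phi : F \to \Z/2\Z$ defined by $\phi(x_i) = 1$ factors through the quotient $F \twoheadrightarrow G$. Since $\phi$ sends a word $w \in F$ to $|w| \bmod 2$ (where $|w|$ is the total letter count in the alphabet $X^{\pm 1}$), this factorisation is equivalent to the assertion that every word in $F$ which is trivial in $G$ has even length.

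For the backward direction, given an index-$2$ subgroup $H$ with no $x_i$ in it, the composite $F \to G \to G/H \iso \Z/2\Z$ sends each $x_i$ to the nontrivial element and so coincides with $\phi$. A loop in $\Omega$ is a cyclically reduced word $w \in F$ with $w =_G 1$; hence $\phi(w) = 0$, that is, $|w|$ is even. Every loop in $\Omega$ therefore has even length, so the period of $\rv{X}$ is even, and by the previous lemma it equals $2$.

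Conversely, assume the period is $2$ and take any $w \in F$ with $w =_G 1$. First freely reduce $w$ to a reduced word $w_r$ by removing pairs $aa^{-1}$; each such step removes two letters and preserves evaluation in $G$, so $|w_r| \equiv |w| \pmod 2$ and $w_r =_G 1$. Next, cyclically reduce: while the current word has the form $ava^{-1}$ for some $a \in X^{\pm 1}$, replace it by $v$. Each step again removes two letters and preserves triviality in $G$. The iteration halts at a cyclically reduced word $w'$ (possibly empty) with $w' =_G 1$ and $|w'| \equiv |w| \pmod 2$. If $w'$ is empty then $|w|$ is even directly. Otherwise $w'$ traces a loop in $\Omega$ based at $(1, \epsilon i)$, where $x_i^{\epsilon}$ is its final letter, and hence has even length by the period-$2$ hypothesis. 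Either way $|w|$ is even, so $\phi$ annihilates $\ker(F \to G)$, descends to $\bar\phi : G \to \Z/2\Z$, and $H := \ker \bar\phi$ is an index-$2$ subgroup of $G$ with $x_i \notin H$ for all $i$.

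The principal difficulty is the parity-tracking reduction in the forward direction: one must carry an arbitrary trivial word $w$ to a cyclically reduced trivial word while simultaneously controlling both triviality in $G$ and the parity of the length. Once that is in hand, the identification of loops in $\Omega$ with cyclically reduced trivial words---already made in the preamble to this lemma---closes the argument immediately.
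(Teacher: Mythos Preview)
Your proof is correct and is organised somewhat differently from the paper's. The paper defines $H$ directly as the set of elements of $G$ representable by an even-length reduced word, then verifies by hand that $H$ is closed under products and inverses and that products of two elements of $G\smallsetminus H$ land in $H$, giving index at most $2$; finally it shows $x_1\notin H$ by exactly the parity-preserving reduction/cyclic-reduction argument you use (applied to $x_1^{-1}w$ for an even-length $w$ with $w=_G x_1$). You instead recast the whole question as whether the length-parity homomorphism $\phi\colon F\to\Z/2\Z$ descends to $G$, so that the subgroup and index-$2$ properties come for free and the only work is the reduction step common to both arguments. Your packaging is marginally cleaner for this reason; the paper's version is more hands-on but uses the same essential idea.
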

\begin{proof}
The `if' direction is clear. Conversely assume that the period of \rv{X} is 2.

Let $H$ be the set of elements $g\in G$ such that there exists an even length reduced word $w$ such that $w=_G g$. Clearly $H$ contains 1 and is invariant under inversion. If $g_1,g_2\in H$ are represented by even length reduced words $w_1$ and $w_2$ then fully reducing $w_1 w_2$ gives an even length reduced word represnting $g_1g_2$. So $H$ is a subgroup of $G$. If $g_1, g_2\in G\smallsetminus H$, then there are odd length words $w_1, w_2$ representing $g_1$ and $g_2$; again reducing $w_1w_2$ gives an even length reduced word showing $g_1g_2\in H$. So $H$ has index 2 in $G$. 

It only remains to show that $x_1\in H$ is impossible. Suppose $x_1\in H$ and consider a reduced word $w$ of even length $2l$ in $F_n$ evaluating to $x_1$ in $G$. By reducing and cyclically reducing $x_1^{-1}w$ we find an odd-length loop in $\Omega$, which is forbidden since the period of \rv{X} is 2.
\end{proof}
Note that the subgroup $H$ in the above lemma is unique: if $H_1$ and $H_2$ are index 2 subgroups of $G$ such that no $x_i$ lies in $H_1\cup H_2$, then the image of the natural quotient map $G\to G/H_1\times G/H_2$ has order 2, so that $H_1=H_2$.
\begin{lem}
Suppose \rv{X} is irreducible and has period 2, and let $H$ be the subgroup from the previous lemma. Consider the processes $\rv{Y}=\rv[2l+2]{X}$ and $\rv{Z}=\rv[2l+1]{X}$ on $\Omega$. Then \rv{Y} is an irreducible aperiodic Markov process on $\Omega_H= H\times\{\pm1, \pm2,\ldots, \pm n\}$ and \rv{Z} is an irreducible aperiodic Markov process on $\Omega'_H=(G\smallsetminus H)\times\{\pm1, \pm2,\ldots, \pm n\}$.
\end{lem}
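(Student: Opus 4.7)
The plan is to establish, for each of \rv{Y} and \rv{Z}, three claims in turn: that the process takes values in the asserted state space and is Markov there; that it is irreducible; and that it is aperiodic.

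The key underlying observation is that reduction of a word preserves parity of length, so any two reduced words representing the same $g\in G$ have the same parity. Combined with the definition of $H$ from the previous lemma, this implies that $g\in H$ if and only if every reduced word representing $g$ has even length. Since the first coordinate of \rv[k]{X} is built up one non-cancelling letter at a time, it arises as a reduced word of length $k$; hence the first coordinate of \rv[2l+2]{X} lies in $H$ and that of \rv[2l+1]{X} lies in $G\smallsetminus H$. Equivalently, the transition matrix $P$ of \rv{X} sends $\Omega_H$ into $\Omega'_H$ and vice versa, so $P^2$ preserves each piece and restricts to well-defined Markov transition matrices for \rv{Y} on $\Omega_H$ and \rv{Z} on $\Omega'_H$.

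For irreducibility I would argue as follows. Given any two states $A=(h_1,\epsilon_1 j_1)$ and $B=(h_2,\epsilon_2 j_2)$ in $\Omega_H$, every \rv{X}-path from $A$ to $B$ corresponds to a reduced word evaluating to $h_1^{-1}h_2\in H$, and so has even length by the observation above. By irreducibility of \rv{X}, at least one such path exists, necessarily of length $2m\geq 2$, so $P^{2m}(A,B)>0$, yielding a \rv{Y}-transition from $A$ to $B$. The argument for \rv{Z} on $\Omega'_H$ is identical, using that $h_1,h_2\in G\smallsetminus H$ forces $h_1^{-1}h_2\in H$ since $G/H\iso\Z[2]$.

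Finally, for aperiodicity I would count loop lengths. Setting $L_A=\{k\geq 1 : P^k(A,A)>0\}$ for any $A\in\Omega_H$, the hypothesis that \rv{X} has period $2$ gives $L_A\subseteq 2\N$ with $\gcd L_A=2$, so $L_A=2T_A$ for some $T_A\subseteq\N$ with $\gcd T_A=1$. The set of \rv{Y}-loop lengths at $A$ is precisely $T_A$, since one step of \rv{Y} equals two steps of \rv{X}; hence \rv{Y} has period $1$ at $A$, and the same argument applies to \rv{Z}. The main obstacle, insofar as there is one, is careful bookkeeping of the parity-based splitting of $\Omega$; no new ideas beyond those already deployed in the previous lemmas are required.
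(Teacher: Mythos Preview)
Your proposal is correct and follows essentially the same line as the paper's proof: both establish the state-space splitting via parity, deduce irreducibility of \rv{Y} and \rv{Z} from irreducibility of \rv{X} together with the parity constraint, and obtain aperiodicity by halving the return-time set. Your aperiodicity argument via $L_A=2T_A$ is a slightly more direct unpacking of what the paper phrases as ``the period of \rv{X} is twice the greatest common divisor of these two periods'', but the content is the same.
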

\begin{proof}
By the definition of the initial distribution of \rv{X}, we find that $\rv{Y}$ lies in $\Omega_H $ for all $l$ and $\rv{Z}$ lies in $\Omega'_H$ for all $l$. 

Irreducibility of \rv{Y} is the statement that any two points in $\Omega_H$ are connected in $\Omega$ by an even length path---but they are connected by some path by irreducibility of \rv{X}, and any odd length path must, by definition of $H$, connect a point in $\Omega_H$ to $\Omega'_H$ or vice versa. Hence \rv{Y} is irreducible. Similarly \rv{Z} is irreducible.

Finally note that the transition matrices for \rv{Y} and \rv{Z} are identical, so they have the same period. But the period of \rv{X} is, by definition, twice the greatest common divisor of these two periods. Hence \rv{Y} and \rv{Z} are aperiodic and the lemma is complete.
\end{proof}
\begin{theorem}\label{ConvOfRWs}
Let $G$ be a finite group, let $X=\{x_1,\ldots, x_n\}$ be a set of size $n\geq 2$ and fix a function $X\to G$. Suppose $G$ is generated by $X$ and consider a non-backtracking random walk \rv{X} on $(G,X)$. If \rv{X} is aperiodic then \rv{\overline X} converges (in distribution) to the uniform distribution on $G$. Otherwise there exists an index 2 subgroup $H$ of $G$ with $x_i\notin H$ for all $i$ such that $\rv[2l]{\overline X}$ converges to the uniform distribution on $H$ and $\rv[2l+1]{\overline X}$ converges to the uniform distribution on $G\smallsetminus H$.
\end{theorem}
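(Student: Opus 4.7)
The plan is to combine the three preceding lemmas with the standard convergence theorem for finite-state irreducible aperiodic Markov chains, together with a symmetry argument exploiting the $G$-invariance of the transition matrix of $\rv{X}$. First I would observe that since $G$ is finite but $F(X)$ is infinite for $n\geq 2$, $X$ cannot freely generate $G$; so the first lemma shows that $\rv{X}$ is irreducible, and the second lemma then forces the period of $\rv{X}$ to be $1$ or $2$. The crucial ingredient in both cases is that the transition probabilities $\alpha_{\epsilon' j,\epsilon i}$ depend only on the $\{\pm1,\ldots,\pm n\}$-indices, so the transition matrix commutes with the left-multiplication action of $G$ on the $G$-factor of $\Omega$.

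In the aperiodic case, the standard convergence theorem yields that $\rv{X}$ converges in distribution to the unique stationary distribution $\pi$ on $\Omega$. Since the push-forward of $\pi$ under left multiplication by any $g\in G$ is again stationary, uniqueness forces $\pi$ to be $G$-invariant; therefore $\pi(g,\epsilon i)$ is independent of $g$, and the push-forward of $\pi$ along the projection $\Omega\to G$ is uniform on $G$. This yields the first conclusion, since convergence of $\rv{X}$ in distribution immediately gives convergence of its projection $\rv{\overline X}$.

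In the period-$2$ case, the third lemma supplies the index-$2$ subgroup $H$, and by the fourth lemma the processes $\rv{Y}=\rv[2l+2]{X}$ and $\rv{Z}=\rv[2l+1]{X}$ are irreducible aperiodic Markov chains on $\Omega_H$ and $\Omega'_H$ respectively. I would apply the same symmetry argument with $H$ in place of $G$: left multiplication by $H$ preserves both $\Omega_H$ and $\Omega'_H$ and commutes with the respective (two-step) transition matrices, and $H$ acts transitively on each of $H$ and $G\smallsetminus H$, so the unique stationary distributions of $\rv{Y}$ and $\rv{Z}$ push forward to the uniform distributions on $H$ and on $G\smallsetminus H$ respectively. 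The only mildly nontrivial point is the symmetry step itself; but this is standard, since whenever a group acts on the state space by permutations commuting with the transition matrix, the push-forward of any stationary distribution is again stationary, and uniqueness then forces invariance.
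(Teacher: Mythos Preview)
Your proof is correct and follows essentially the same route as the paper: invoke the preceding lemmas to obtain irreducibility and the period dichotomy, apply the standard convergence theorem for finite irreducible aperiodic chains, and then use left-$G$-invariance (respectively left-$H$-invariance in the period-$2$ case) of the transition probabilities together with uniqueness of the stationary distribution to force uniformity. Your treatment of the period-$2$ case is in fact slightly more careful than the paper's, since you explicitly note that it is $H$ (not all of $G$) that preserves $\Omega_H$ and $\Omega'_H$ and acts transitively on the relevant cosets.
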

\begin{proof}
Given the previous results, we find by the standard theory of Markov chains that the processes in the theorem converge to some stationary distributions. It remains to note that these are uniform on the required sets.

When \rv{X} is aperiodic, it converges to the unique stationary distribution $\pi_{(g,\epsilon i)}$ on $\Omega$ defined by the relation
\[ \pi_{(g,\epsilon i)} = \sum_{h,\epsilon' j} \pi_{(h,\epsilon'j)}p_{(h,\epsilon' j),(g,\epsilon i)} \]
where the $p_{(h,\epsilon' j),(g,\epsilon i)}$ are the transition probabilities
\[ p_{(h,\epsilon' j),(g,\epsilon i)}=\PP\big(\rv[l+1]{X} = (g,\epsilon i) \mid \rv{X} = (h,\epsilon' j)\big)\]
These transition probabilities are left $G$-invariant by definition, hence for any $g_0\in G$ the translated distribution $g_0\cdot \pi_{(g,\epsilon i)} = \pi_{(g_0g,\epsilon i)}$ also satisfies the above condition, and hence equals $\pi$---that is, the stationary distribution of \rv{X} on $\Omega$ is $G$-invariant. Hence the limiting distribution of \rv{\overline X} on $G$ is also $G$-invariant, and hence uniform.

The uniformity of \rv{Y} and \rv{Z} on the given sets follows similarly, given the fact that the two-step transition probabilities are also $G$-invariant.   
\end{proof}

\subsection*{Virtually abelian quotients}
We will now study the maps to virtually abelian groups in the few relators model of random groups, with the following goal in mind.
\begin{theorem}\label{VirtAbelian}
For all $M\in\N$ there exists a constant $c(M)$ such that, for any infinite group $G$ having an abelian normal subgroup of index at most $M$,
\[\PP(\exists \text{ a surjection }\rv{G}\twoheadrightarrow G) \to 0\quad \text{ as }l\to\infty \]
for a random group \rv{G} having at least $\rho(l)\geq c(M)$ relators.
\end{theorem}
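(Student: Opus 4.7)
The plan is to reduce from the infinite target $G$ to a family of finite quotients $G_q := G/qA$ indexed by primes $q$, to which Theorem~\ref{ConvOfRWs} applies directly, and then to combine a union bound with the random walk estimate. Following the reductions in the introduction, I may assume the abelian normal subgroup $A\nsgp G$ is torsion-free, hence isomorphic to $\Z^k$ with $k\ge 1$ (because $G$ is infinite). For every prime $q$ coprime to $[G:A]$, the subgroup $qA$ is characteristic in $A$ and hence normal in $G$, so $G_q$ is a finite quotient of $G$ of order $[G:A]\cdot q^k\le Mq^k$. Since any surjection $\rv{G}\twoheadrightarrow G$ composes with $G\twoheadrightarrow G_q$ to yield a surjection $\rv{G}\twoheadrightarrow G_q$, one has
\[
\PP(\exists\,\rv{G}\twoheadrightarrow G) \le \PP(\exists\,\rv{G}\twoheadrightarrow G_q)
\]
for every such $q$, and it suffices to control the right-hand side.

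For fixed $q$, homomorphisms $\bar\phi\colon F \to G_q$ are determined by their values on the $n$ generators, so there are at most $|G_q|^n$ of them. For a surjective $\bar\phi$, the event that $\bar\phi$ factors through $\rv{G}$ is that $\bar\phi(r)=1$ for every $r\in\rv{R}$; by independence of the $\rho(l)$ relators this has probability $\PP(\bar\phi(r)=1)^{\rho(l)}$, where on the right $r$ denotes a uniform reduced word in $S_l$. Since $\bar\phi(X)$ generates $G_q$ by surjectivity, Theorem~\ref{ConvOfRWs} applied to the non-backtracking random walk on $(G_q,\bar\phi(X))$ gives $\limsup_{l\to\infty}\PP(\bar\phi(r)=1)\le 2/|G_q|$, the factor $2$ absorbing the possible period-$2$ case (in which the wrong-parity probability is in fact zero). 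A union bound over surjective $\bar\phi$ then yields
\[
\limsup_{l\to\infty}\PP(\exists\,\rv{G}\twoheadrightarrow G_q) \le |G_q|^n\bigl(2/|G_q|\bigr)^{\rho(l)} = 2^{\rho(l)}\,|G_q|^{n-\rho(l)}.
\]

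Taking $\rho(l)\ge c(M)$ with any $c(M)\ge n+1$ makes $n-\rho(l)\le -1$; since $|G_q|\ge q$, the bound is at most $2^{\rho(l)}/q$, which tends to $0$ as $q\to\infty$. As the left-hand side $\PP(\exists\,\rv{G}\twoheadrightarrow G)$ has no dependence on $q$, we conclude $\limsup_{l\to\infty}\PP(\exists\,\rv{G}\twoheadrightarrow G)=0$; this confirms the generous bound $c(M)\le Mn$ stated in the introduction. The main delicacy is that the convergence in Theorem~\ref{ConvOfRWs} is non-uniform in the target group, so $l$ and $q$ cannot be made to tend to infinity simultaneously; the fix is the order chosen above (first $l\to\infty$ for each fixed $q$, then $q\to\infty$). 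A secondary bookkeeping issue is obtaining a constant $c(M)$ uniform across $G\in A_M$: here the estimate depends on $G$ only through $|G_q|\ge q$, so no further work is needed beyond the introduction's reduction to a finite collection of isomorphism types.
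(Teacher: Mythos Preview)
Your argument is correct and noticeably simpler than the paper's. The paper fixes the quotient map $f\colon F\to J=G/A$, passes to the kernel $K=\ker f$, and then uses Maschke's theorem and the decomposition of $H_1(K;\F_q)$ into isotypic components to bound the probability that the relators fail to generate the relevant $E$-isotypic piece as a $J$-module; the constant it obtains is $c(M)=\max\{d_F(K;\Z):K\nsgp F,[F:K]\le M\}$, with the quoted bound $c(M)\le Mn$. You instead apply Theorem~\ref{ConvOfRWs} directly to each surjection $\bar\phi\colon F\to G_q$ and take a crude union bound over the at most $|G_q|^n$ such maps, avoiding all representation theory and yielding the constant $c(M)=n+1$ \emph{independent of $M$}. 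That is genuinely better than the paper's stated upper bound (though the paper's finer analysis recovers $c(1)=n$ and, more importantly, gives structural information about \emph{which} relator configurations permit such surjections, via the $J$-module picture).

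Two small points of bookkeeping. First, your displayed inequality
\[
\limsup_{l\to\infty}\PP(\exists\,\rv{G}\twoheadrightarrow G_q) \le 2^{\rho(l)}\,|G_q|^{n-\rho(l)}
\]
has $\rho(l)$ on the right after taking $\limsup_l$ on the left; replace $\rho(l)$ by $c(M)$ using monotonicity of $t\mapsto(2/|G_q|)^t$ for $|G_q|>2$. Second, Theorem~\ref{ConvOfRWs} gives $\limsup_l\PP(\bar\phi(r)=1)\le 2/|G_q|$ but not $\PP(\bar\phi(r)=1)\le 2/|G_q|$ for any finite $l$; you need a $(2+\epsilon)/|G_q|$ for $l\ge L(q,\epsilon)$, exactly as the paper does in its Lemma. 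Neither affects the substance: with these cosmetic fixes your bound becomes $(2+\epsilon)^{n+1}/|G_q|$, and the rest of your $q\to\infty$ argument goes through unchanged.
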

Note one may remove the assumption of normality by replacing the constant $c(M)$ by $c(M!)$, as the normal core of a subgroup of index at most $M$ has index at most $M!$.

Since there are only finitely many groups of a given order up to isomorphism, it suffices to fix a  finite group $J$ of order at most $M$ and to prove the theorem for those $G$ which are an extension of $J$ by an infinite abelian group $A$. We may also without loss of generality assume that $A$ is free abelian: the torsion subgroup of $A$ is characteristic, hence is normal in $G$ and may be factored out. Finally, since there are only finitely many maps from $F$ to $J$, it suffices to consider maps to $G$ which {\em carry} some fixed map $f\colon F\to J$ in the sense that the composition $F\to \rv{G} \to G \to J$ equals $f$. 

We will approach the theorem by `approximating' $G$ using its finite quotients $G/qA$ by the characteristic subgroups $qA$ of $A$, where $q$ is a large prime. We will place a bound on the limiting probability that \rv{G} admits a surjection to $G/qA$; allowing $q$ to tend to infinity will then yield the result. 

In the case when $J$ is trivial, this bound would be obtained in the following manner: a map from $\rv{G}$ to the $\F_q$-vector space $A/qA$ exists precisely when the images of the relators \rv{R} in $H_1(F;\F_q)$ fail to generate $H_1(F;\F_q)\iso\F_q^n$. The fact that the distribution of the images of the relators becomes eventually uniform, combined with elementary estimates of the number of $n$-tuples which generate $\F_q^n$, bounds the probability that \rv{G} can map to $A/qA$.

When $J$ is non-trivial, we will follow a similar procedure: however the natural object to consider is no longer simply a vector space $H_1(F;\F_q)$. Instead we must consider $H_1(\ker f; \F_q)$ with its natural conjugation action of $F/\ker f = J$. To study this $J$-module (that is, representation of $J$ over $\F_q$) we will appeal to some representation theory. In particular we will use the fact that this module is semisimple for $q$ coprime to $|J|$ to bound the number of tuples which generate $H_1(\ker f; \F_q)$ over $J$.
\begin{lem}
Take a finite group $J$ and a surjection $f\colon F\to J$. Let $K$ be the kernel of $f$ and let $m=\dFK$ be the minimal number of generators of $K^{\rm ab}$ as a $J$-module. Let $q>|J|$ be a prime. Let $E$ be an irreducible representation of $J$ over the field $\F_q$ and let $G$ be an extension of groups
\[1\to E\to G\to J\to 1 \]
such that conjugation in $G$ induces the given action of $J$ on $E$. 

Then for all $\epsilon >0$ there exists an $L$ such that for $l\geq L$,
\[\PP(\exists \phi\colon\rv{G} \twoheadrightarrow G\text{ carrying }f)\leq \left(1- (1-\epsilon)^m\prod_{j=1}^m (1 - \frac{1}{|E|^j} ) \right)\left( \frac{2+2\epsilon}{|J|}\right)^m \]
for a random group \rv{G} having at least $m$ relators.
\end{lem}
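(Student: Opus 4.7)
The plan is to translate the existence of a covering surjection into a module-theoretic question and bound it by two applications of Theorem~\ref{ConvOfRWs} followed by a dimension count. Any surjection $\phi\colon\rv{G}\twoheadrightarrow G$ carrying $f$ lifts, since $F$ is free, to a surjective $\tilde f\colon F\to G$ covering $f$ with $\tilde f(r)=1$ for every $r\in\rv R$. As $E$ is abelian and normal in $G$, the restriction $\tilde f|_K$ factors through $K^{\rm ab}$ as a $J$-equivariant map $\psi\colon K^{\rm ab}\to E$, and surjectivity of $\tilde f$ forces $\psi$ to be nonzero, hence surjective by simplicity of $E$. The event whose probability we wish to bound is therefore contained in the conjunction of
\begin{itemize}
\item[(A)] every $r\in\rv R$ lies in $K$, and
\item[(B)] some nonzero $\psi\in\Hom_J(K^{\rm ab},E)$ vanishes on every $[r]\in K^{\rm ab}$;
\end{itemize}
so it suffices to bound $\PP(A)\cdot\PP(B\mid A)$.

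For $\PP(A)$ I would apply Theorem~\ref{ConvOfRWs} to the non-backtracking walk on $J$ induced by $f$. This walk is irreducible by the preceding lemmas, and the theorem yields $\PP(r\in K)\le(2+2\epsilon)/|J|$ for $l$ sufficiently large, the factor of two absorbing the period-$2$ case. Independence of the $m$ relators then gives $\PP(A)\le((2+2\epsilon)/|J|)^m$. For $\PP(B\mid A)$ I would apply Theorem~\ref{ConvOfRWs} again, now to the walk on the finite group $\Gamma:=F/([K,K]K^q)$, which fits into $1\to V\to\Gamma\to J\to 1$ with $V=K^{\rm ab}\otimes\F_q$. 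Conditioning on a random word lying in $V$, its image is within factor $(1-\epsilon)^m$ of uniformly distributed on $V$ across the $m$ relators. As $q\nmid|J|$, the $\F_q J$-module $V$ is semisimple; nonzero $J$-equivariant maps $V\to E$ factor through the $E$-isotypic component $V_E\cong E^a$, and so event (B) becomes the failure of $m$ approximately uniform random elements of $V_E$ to $J$-generate $V_E$.

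Writing $V_E=E\otimes_{\F_q}\F_q^a$ and expanding each random vector in an $\F_q$-basis of $E$ converts $J$-generation of $V_E$ into $\F_q$-spanning of $\F_q^a$ by $m\dim_{\F_q}E$ uniform random vectors; the latter event has probability $\prod_{j=0}^{a-1}(1-q^{j-m\dim_{\F_q}E})$. Using the explicit description of $V$ as the kernel of the Fox-derivative map $\F_q J^n\to I_J$, one checks $a\le(m-1)n_E+1$, where $n_E$ denotes the multiplicity of $E$ in the regular representation $\F_q J$; a short comparison of exponents (using $q>|J|\ge 2$, hence $q\ge3$) then shows the spanning probability is at least $\prod_{j=1}^m(1-1/|E|^j)$. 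Incorporating the $(1-\epsilon)^m$ slack gives $\PP(B\mid A)\le 1-(1-\epsilon)^m\prod_{j=1}^m(1-1/|E|^j)$, and multiplying by the earlier bound on $\PP(A)$ completes the estimate.

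The chief technical obstacle will be the careful handling of periodicity in Theorem~\ref{ConvOfRWs}, both on $J$ and on $\Gamma$: in the period-$2$ case the limiting distribution is supported on one of two index-$2$ cosets, and one must verify that after conditioning on landing in $V$, the induced distribution on $V_E$ remains within the deviation from uniform needed for the spanning estimate. The factors of two in $(2+2\epsilon)/|J|$ and the $(1-\epsilon)^m$ slack in the conditional step are calibrated exactly to absorb these effects.
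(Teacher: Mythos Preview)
Your overall strategy matches the paper's: reduce the existence of a carrying surjection to the module-theoretic event that the relators land in $K$ but fail to $J$-generate the $E$-isotypic component of $H_1(K;\F_q)$, apply Theorem~\ref{ConvOfRWs} to get near-uniformity, then count generating tuples. The paper streamlines your two separate random-walk applications into one by working directly on the quotient $H=(F/[K,K]K^q)/\ker\pi_E$, sitting in $1\to K'_E\to H\to J\to 1$; conditioning the walk on $H$ to land in $K'_E$ handles your events (A) and (B) simultaneously, and the period-$2$ case is absorbed exactly as you anticipate.

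The substantive divergence is in the generation count, and here your route has a genuine problem. The paper bypasses Fox derivatives entirely with a one-line observation: since $K'=H_1(K;\F_q)$ is $J$-generated by $m$ elements by the very definition of $m$, so is its direct summand $K'_E$, whence $K'_E\cong E^a$ with $a\le m$; the proportion $\prod_{j=1}^m(1-|E|^{-j})$ of $J$-generating $m$-tuples is then quoted by analogy with $|\mathrm{GL}_m(\F_q)|$. Your detour through Fox derivatives and a ``comparison of exponents'' is unnecessary, and your intermediate conversion is not correct as stated: writing $y=\sum e_i\otimes v_i$ in an $\F_q$-basis of $E$, it is true that $J$-generation of $E^a$ forces the $v_i$ to span $\F_q^a$, but the converse fails unless $\mathrm{End}_J(E)=\F_q$. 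The correct Morita translation replaces $\F_q$ by the endomorphism field $D=\mathrm{End}_J(E)$ and identifies $J$-generation with $D$-spanning of $D^a$. Since your implication runs the wrong way for the \emph{lower} bound on the generation probability that you need, this step as written does not establish the claimed inequality; the paper's direct appeal to the $m$-generation of $K'_E$ avoids the issue altogether.
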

\begin{rmk}
The indefinite article in the phrase `an extension of groups' above may in fact be replaced by a definite article---every such extension is isomorphic to the split extension $E\rtimes J$ since $|J|$ and $|E|$ are coprime, so that $H^2(J,E)=0$.
\end{rmk}
\begin{proof}
Let $K'=H_1(K;\F_q)$ be the modulo $q$ abelianisation of $K$. The action of $F$ on $K$ by conjugation reduces to an action of $J$ on $K'$, and by definition the rank of $K'$ over $J$ is at most $m$. By Maschke's Theorem \cite[Theorem 4.1.1]{Kow14} $K'$ is a semisimple $J$-module. 

Let $K'_E$ be the $E$-isotypic component of $K'$, and let $\pi_E\colon K' \to K'_E$ be the projection map. Note that $K'_E$ is equal to a direct sum of at most $m$ copies of $E$ and that any $J$-linear map $K'\to E$ factors through $\pi_E$. 

Suppose there exists a map $\phi\colon \rv{G}\to G$ carrying $f$. Then $f\colon F\to J$ descends to a map $\rv{G} \to J$, so the relators \rv{R} of \rv{G} lie in $K$. Furthermore the restriction of $\phi$ to a map $K \to E$ yields a non-trivial $J$-linear map $K'\to E$ which vanishes on the image of \rv{R} in $K'$. It follows that $\pi_E(\rv{R})$ does not generate $K'_E$ as a $J$-module. (Conversely if the relators lie in $K$ but do not generate $K'_E$, then they lie in some direct summand of $K'_E$ so there is a surjection from \rv{G} to the extension $G$ of $J$ by $E$.)

Now consider the short exact sequence
\[1\to K'_E \to H \to J \to 1 \]
where $H$ is the quotient of $F/[K,K]K^q$ by $\ker\pi_E$. Forming reduced words in $F$ now gives a non-backtracking random walk \rv{X} on $(H,X)$. By Theorem \ref{ConvOfRWs}, if \rv{X} is aperiodic then the summed process \rv{\overline X} converges to the uniform distribution. So, for all $\epsilon>0$, there exists $L$ such that for $l\geq L$ we have 
\[\big|\PP(\rv{\overline X} = h\,|\,\rv{\overline X}\in B)-1/|B|\big|\leq \epsilon/|B|\] 
for each subset $B\subseteq H$ and all $h\in B$. 

Now take $m$ instances $\rv{\overline X}^{(i)}$ of this process corresponding to the first $m$ relators of $\rv{G}$. We have
\begin{eqnarray*}
\lefteqn{\PP(\exists \rv{G}\twoheadrightarrow G\text{ carrying }f)} \\
 & \leq& \PP\big(\rv{\overline X}^{(1)},\ldots,\rv{\overline X}^{(m)}\text{ do not generate }K'_E \,\big|\, \rv{\overline X}^{(i)} \in K'_E \forall i\big) \cdot \PP(\rv{\overline X}^{(i)} \in K'_E \forall i)\\
 & \leq & \left( 1- \PP\big(\rv{\overline X}^{(1)},\ldots,\rv{\overline X}^{(m)}\text{ do generate }K'_E \,\big|\, \rv{\overline X}^{(i)} \in K'_E \forall i\big)\right)\left( \frac{1+\epsilon}{|J|}\right)^m \\
 &\leq & \left(1- (1-\epsilon)^m\prod_{j=1}^m (1 - \frac{1}{|E|^j} ) \right)\left( \frac{1+\epsilon}{|J|}\right)^m
\end{eqnarray*}
where the final line uses the formula
\[\big| \{(y_1,\ldots y_m)\in (E^{m})^m \text{ which generate }E^{m}\text{ over }J \}\big| = \prod_{j=1}^m (1 - \frac{1}{|E|^j} ) \cdot |E|^{m^2}\]
completely analogous to the computation of $|{\rm GL}_m(\F_q)|$.

On the other hand, if \rv{X} has period 2 then by Theorem \ref{ConvOfRWs} there is some index 2 subgroup $J'$ of $J$ such that $\rv[l]{X}\in J'$ if and only if $l$ is even; and the distribution of \rv[2l]{X} converges to the uniform distribution on $J'$. Note that $K'_E\subseteq J'$. The probability that there exists a map $\rv{G}\twoheadrightarrow G$ carrying $H$ is now zero when $l$ is odd; and when $l$ is even we may perform essentially the same calculation as above, but with $|J|$ replaced by $|J'|=|J|/2$.
\end{proof}
\begin{prop}\label{FixedVirtAbelian}
Take a finite group $J$ and a surjection $f\colon F\to J$. Let $K$ be the kernel of $f$ and let $m=\dFK$. Let $G$ be an extension of groups
\[1\to A\to G\to J\to 1 \]
such that $A$ is a (non-trivial) free abelian group. Then 
\[\PP(\exists \phi\colon\rv{G} \twoheadrightarrow G\text{ carrying }f) \to 0\quad \text{ as }l\to\infty \]
for a random group \rv{G} having at least $m$ relators.
\end{prop}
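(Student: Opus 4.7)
The plan is to approximate $G$ by its finite quotients $G/qA$ for large primes $q$, and then project further onto an irreducible $\F_q[J]$-summand of $A/qA$ to place ourselves in the setting of the preceding lemma. Since $qA$ is characteristic in the free abelian group $A$, which is normal in $G$, the subgroup $qA$ is normal in $G$; thus any surjection $\phi\colon\rv{G}\twoheadrightarrow G$ carrying $f$ descends to a surjection $\rv{G}\twoheadrightarrow G/qA$ carrying $f$. For primes $q>|J|$, Maschke's Theorem makes $A/qA$ into a semisimple $\F_q[J]$-module, and because $A$ is nontrivial $A/qA$ is nonzero and therefore admits at least one irreducible summand.

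Choose such an irreducible summand $E\subseteq A/qA$ with $J$-invariant complement $E'$. Because $E'$ is $J$-stable it is normal in $G/qA$, so $G_E:=(G/qA)/E'$ fits into a short exact sequence $1\to E\to G_E\to J\to 1$ in which conjugation induces the prescribed $J$-action on $E$. Composing with $G\to G/qA\to G_E$, every surjection $\rv{G}\twoheadrightarrow G$ carrying $f$ induces a surjection $\rv{G}\twoheadrightarrow G_E$ carrying $f$, so
\[\PP(\exists\, \rv{G}\twoheadrightarrow G\text{ carrying }f)\leq \PP(\exists\, \rv{G}\twoheadrightarrow G_E\text{ carrying }f).\]

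Applying the preceding lemma to $G_E$, for every $\epsilon>0$ and all sufficiently large $l$ we have
\[\PP(\exists\, \rv{G}\twoheadrightarrow G_E\text{ carrying }f)\leq \left(1-(1-\epsilon)^m\prod_{j=1}^m\left(1-\frac{1}{|E|^j}\right)\right)\left(\frac{2+2\epsilon}{|J|}\right)^m.\]
Combining this with the previous inequality, taking $\limsup_{l\to\infty}$ and then letting $\epsilon\to 0$, yields an upper bound on $\limsup_{l\to\infty}\PP(\exists\,\rv{G}\twoheadrightarrow G\text{ carrying }f)$ that depends on $q$ only through the factor involving $|E|\geq q$. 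Since this $\limsup$ is itself independent of $q$, letting $q\to\infty$ forces it to vanish, proving the proposition. The main point that must be verified carefully is that $G_E$ genuinely satisfies the hypotheses of the preceding lemma---in particular that the conjugation action of $J$ on the image of $E$ in $G_E$ matches the $\F_q[J]$-module structure inherited from $A/qA$---which is routine but essential to justify the whole approximation scheme.
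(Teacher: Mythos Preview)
Your argument is correct and follows the same strategy as the paper: reduce modulo $qA$ for large primes $q$, use Maschke's theorem to pass to an extension of $J$ by an irreducible $\F_q[J]$-module, apply the preceding lemma, and let $q\to\infty$. The only notable difference is cosmetic: the paper argues by contradiction with a fixed $\epsilon=1/2$ and takes a union bound over all irreducible $\F_q[J]$-modules $E$ (invoking the bound $\leq |J|$ on their number), whereas you fix a single summand $E$ of $A/qA$ from the outset and take a clean $\limsup$---your version is slightly more economical since the union bound is unnecessary once $G$, and hence a specific $E$, is fixed.
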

\begin{proof}
Suppose the contrary, so that there exist a $\delta>0$ and an infinite sequence $(l_k)_{k\geq 1}$ such that for all $k$, 
\[\PP(\exists \phi\colon\rv[l_k]{G} \twoheadrightarrow G\text{ carrying }f)\geq\delta \]
Now, if such a map $\phi\colon \rv[l_k]{G} \to G$ exists, then for all primes $q>|J|$ there exists a map $\rv[l_k]{G} \twoheadrightarrow G/qA$. Furthermore $A/qA$ is a semisimple $J$-module by Maschke's theorem, hence there is an irreducible $J$-module $E$ and a quotient group $G_E$ of $G/qA$ which is an extension of $J$ by $E$. Taking $l_k$ sufficiently large, we may apply the previous lemma for $\epsilon = 1/2$. Then for each $q$ we have, for some $l_k$ sufficiently large,
\begin{eqnarray*}
\delta\leq \PP(\exists\rv[l_k]{G} \twoheadrightarrow G\text{ carrying }f)&\leq&  \PP(\exists \rv[l_k]{G} \twoheadrightarrow G_E\text{ carrying }f\text{ for some }E)\\
&\leq & \sum_{|E|} \left(1- \frac{1}{2^m}\prod_{j=1}^m (1 - \frac{1}{|E|^j} ) \right)\left( \frac{3}{|J|}\right)^m\\
&\leq& |J| \left(1- \frac{1}{2^m}\prod_{j=1}^m (1 - \frac{1}{q^j} ) \right)\left( \frac{3}{|J|}\right)^m
\end{eqnarray*}
where in the final line we have used the fact that for all $q$ there are at most $|J|$ distinct irreducible $J$-representations over the field $\F_q$ \cite[Theorem 4.2.5]{Kow14}. The quantity on the right hand side of this equation tends to zero as $q\to \infty$---in particular there is $q$ such that it is less than $\delta$, giving a contradiction. This proves the proposition.
\end{proof}
\begin{proof}[Proof of Theorem \ref{VirtAbelian}]
Given $M$, fix a constant number of relators
\[c(M) = \max\left\{d_F(K)\mid K\nsgp F\text{ and }[F:K]\leq M \right\} \]
The theorem now follows immediately from Proposition \ref{FixedVirtAbelian}. Note that $c(M)\leq Mn$.
\end{proof}
We note the following equivalent characterisation of Theorem \ref{VirtAbelian}.
\begin{theorem}
For all $M\in\N$ there exists a constant $c'(M)$ such that 
\[\PP\big(\exists H\leq\rv{G} \text{ such that }[G:H]\leq M\text{ and }H\twoheadrightarrow \Z \big) \to 0 \quad \text{ as }l\to\infty \]
for a random group \rv{G} having at least $\rho(l)\geq c'(M)$ relators.
\end{theorem}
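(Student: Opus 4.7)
The plan is to deduce this theorem from Theorem \ref{VirtAbelian} by building a dictionary between finite-index subgroups surjecting to \Z{} and surjections from \rv{G} onto infinite virtually abelian groups. Concretely I would set $c'(M) := c(M!)$ and show that whenever \rv{G} admits a subgroup $H$ with $[\rv{G}:H]\leq M$ and a surjection $\phi\colon H\twoheadrightarrow\Z$, it also admits a surjection onto some infinite group $G$ containing an abelian normal subgroup of index at most $M!$; the theorem then follows directly from Theorem \ref{VirtAbelian} applied with parameter $M!$, using the equivalent formulation (noted in the discussion after that theorem) which ranges over the finitely many isomorphism classes of $n$-generated targets.

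The construction goes as follows. Let $K=\ker\phi$, which is normal in $H$ as the kernel of a homomorphism, so $H\leq N_{\rv{G}}(K)$ and $K$ has at most $[\rv{G}:H]\leq M$ distinct conjugates $K=K_1,\ldots,K_s$ in \rv{G}. Let $N=\bigcap_{i=1}^s K_i$ be the normal core of $K$, and put $G:=\rv{G}/N$. Choosing coset representatives $g_i$ with $K_i=g_iKg_i^{-1}$, each quotient $\rv{G}/K_i$ contains $g_iHg_i^{-1}/K_i\iso\Z$ as a subgroup of index at most $M$, and the natural embedding $\rv{G}/N\hookrightarrow\prod_i\rv{G}/K_i$ exhibits $G$ as virtually abelian. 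More precisely, the normal core $H_0:=\bigcap_i g_iHg_i^{-1}$ of $H$ in \rv{G} has index at most $M!$, contains $N$, and the induced map $H_0/N\to\prod_i g_iHg_i^{-1}/K_i\iso\Z^s$ is injective, so $H_0/N$ is a free abelian normal subgroup of $G$ of index at most $M!$. It is nontrivial because $\phi|_{H_0}$ has image of finite index in $\phi(H)=\Z$ and factors through $H_0/N$, so $H_0/N$ surjects onto a nonzero subgroup of \Z{} and $G$ is infinite.

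With this reduction in hand, the closing step is immediate: if $\rho(l)\geq c(M!)$ then by (the strengthened form of) Theorem \ref{VirtAbelian} the probability that \rv{G} surjects onto any infinite group with abelian normal subgroup of index at most $M!$ tends to zero, and the construction above shows that this event contains the event that \rv{G} has a finite-index subgroup of index at most $M$ mapping onto \Z. The main work is therefore not in this deduction but in Theorem \ref{VirtAbelian} itself; the only non-routine point to check here is that $H_0/N$ is genuinely infinite, which is the step that uses the fact that a surjection to \Z{} restricts nontrivially to any finite-index subgroup. The resulting bound is $c'(M)\leq M!\cdot n$.
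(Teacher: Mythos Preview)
Your proposal is correct and follows essentially the same approach as the paper: set $c'(M)=c(M!)$, pass to the normal core $H_0$ of $H$, and produce from the surjection $H_0\twoheadrightarrow\Z$ an infinite quotient of $\rv{G}$ with an abelian normal subgroup of index at most $M!$, then invoke Theorem~\ref{VirtAbelian}. The only difference is cosmetic: the paper packages the virtually abelian quotient as the image of $\rv{G}$ in the wreath product $\Z\wr(\rv{G}/H_0)$, whereas you build the same object by hand as $\rv{G}/N$ with $N$ the normal core of $\ker\phi$ and verify directly that $H_0/N\hookrightarrow\Z^s$; these constructions yield the same group and the same bound $c'(M)\leq M!\cdot n$.
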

\begin{proof}
Whenever \rv{G} admits such a subgroup $H$ we may consider the normal core $H'$ of $H$ in \rv{G}, which has index at most $M!$ in $\rv{G}$ and admits a surjection to \Z. Then \rv{G} admits a surjection to an infinite subgroup of the wreath product $\Z \wr (\rv{G}/H')$, which has an abelian normal subgroup of index at most $M!$. Applying Theorem \ref{VirtAbelian} with $c'(M)= c(M!)$ relators gives the result.
\end{proof}
It is interesting to compare this to other results concerning random groups. Theorem \ref{VirtAbelian}, concerning constant numbers of relators, tends to suggest to the unwary that in a model with many relators---for example in the density model at density $d$---one could expect or hope that every finite index subgroup has first betti number zero. This is however false in full generality: random groups at density $d<1/6$ are known to be hyperbolic and to act freely and cocompactly on CAT(0) cube complexes \cite{OW11}, hence are virtually special \cite{Agol13} and certainly have positive virtual first betti number. At densities $d>1/3$, \.Zuk's theorem \cite{Zuk03, KK13} applies and random groups have property (T), hence have virtual first betti number zero. It appears to be unknown where the phase transition lies.

\bibliographystyle{alpha}
\bibliography{Rand_Grps.bib}

\begin{thebibliography}{FvdH13}

\bibitem[Ago13]{Agol13}
Ian Agol.
\newblock {The virtual Haken conjecture}.
\newblock {\em {Documenta Mathematica}}, 18:1045--1087, 2013.
\newblock {With an appendix by Ian Agol, Daniel Groves, and Jason Manning}.

\bibitem[AO96]{AO96}
Goulnara~N Arzhantseva and A~Yu Ol'shanskii.
\newblock {The class of groups all of whose subgroups with lesser number of
  generators are free is generic}.
\newblock {\em Mathematical Notes}, 59(4):350--355, 1996.

\bibitem[FvdH13]{FvdH13}
Robert Fitzner and Remco van~der Hofstad.
\newblock {Non-backtracking random walk}.
\newblock {\em Journal of Statistical Physics}, 150(2):264--284, 2013.

\bibitem[Gro93]{Gromov93}
Mikhael Gromov.
\newblock {Asymptotic invariants of infinite groups}.
\newblock {\em Geometric group theory}, 2:1--295, 1993.

\bibitem[KK13]{KK13}
Marcin Kotowski and Micha{\l} Kotowski.
\newblock {Random groups and property (T): {\.Z}uk's theorem revisited}.
\newblock {\em Journal of the London Mathematical Society}, 88(2):396--416,
  2013.

\bibitem[Kow14]{Kow14}
Emmanuel Kowalksi.
\newblock {\em {An Introduction to the Representation Theory of Groups}},
  volume 155 of {\em {Graduate Studies in Mathematics}}.
\newblock American Mathematical Society, 2014.

\bibitem[KS08]{KS08}
Ilya Kapovich and Paul Schupp.
\newblock {On group-theoretic models of randomness and genericity}.
\newblock {\em Groups, Geometry, and Dynamics}, 2(3):383--404, 2008.

\bibitem[Oll05]{Ollivier05}
Yann Ollivier.
\newblock {\em {A January 2005 invitation to random groups}}, volume~10.
\newblock Sociedade brasileira de matem{\'a}tica Rio de Janeiro, 2005.

\bibitem[OW11]{OW11}
Yann Ollivier and Daniel Wise.
\newblock {Cubulating random groups at density less than 1/6}.
\newblock {\em Transactions of the American Mathematical Society},
  363(9):4701--4733, 2011.

\bibitem[{\.{Z}}uk03]{Zuk03}
Andrzej {\.{Z}}uk.
\newblock {Property (T) and Kazhdan constants for discrete groups}.
\newblock {\em Geometric {\&} Functional Analysis GAFA}, 13(3):643--670, 2003.

\end{thebibliography}
\end{document}